\newcommand{\R}{\ensuremath{\mathbb{R}}}
\newcommand{\SR}{\ensuremath{\mathbb S^3 \times \mathbb R}}
\newcommand{\HR}{\ensuremath{\mathbb H^3 \times \mathbb R}}
\newcommand{\Hi}{\mathbb{H}}
\newcommand{\Sf}{\mathbb{S}}
\newcommand{\Ri}{\mathbb{R}}
\newcommand{\Qi}{\mathbb{Q}_{\varepsilon}}
\newtheorem{theorem}{Theorem}  
\newtheorem*{theorem*}{Main Theorem}          
\newtheorem{lemma}{Lemma}
\theoremstyle{definition}
\title[]{Hypersurfaces of $\SR$ and $\HR$ \\ with constant principal curvatures}
\thanks{F. Manfio was supported by FAPESP, grant 2022/16097-2. J. B. M. dos Santos was supported by Capes and CNPq. J. P. dos Santos was supported by CNPq 315614/2021-8 and FAPDF 0193.001346/2016. J. Van der Veken was supported by the Research Foundation - Flanders (FWO) and the Fonds de la Recherche Scientifique (FNRS) under EOS project G0I2222N and by the KU Leuven Research Fund under project 3E210539. J. P. dos Santos and J. Van der Veken were also supported by Young Researchers Summer Programme 2018 - USP}
\author{F. Manfio, J. B. M. dos Santos, J. P. dos Santos \and J. Van der Veken}
\address{Fernando Manfio - Instituto de Ci\^encias Matem\'aticas e 
Computa\c c\~ao, Universidade de S\~ao Paulo, 13566-590, S\~ao 
Carlos, Brazil}
\email{manfio@icmc.usp.br}
\address{Jo\~ao Batista Marques dos Santos - Centro Multidisciplinar, Universidade Federal do Acre, 69980000, Cruzeiro do Sul-AC, Brazil}
 \email{joao.batista.m.s@ufac.br}
\address{Jo\~ao Paulo dos Santos - Departamento de Matem\'atica, 
Universidade de Bras\'ilia, 70910-900, Bras\'ilia-DF, Brazil}
\email{joaopsantos@unb.br}
\address{Joeri Van der Veken, KU\ Leuven, Department of Mathematics, 
Celestijnenlaan 200B - Box 2400, 3001 Leuven, Belgium}
\email{joeri.vanderveken@kuleuven.be}
\date{\today}
\begin{document}

\subjclass[2020]{53C40, 53C42}

\keywords{isoparametric hypersurfaces, product spaces, constant principal curvatures} 
\begin{abstract}
We classify the hypersurfaces of $\Qi^3\times\R$ with three distinct constant principal curvatures, where $\varepsilon \in \{1,-1\}$ and $\Qi^3$ denotes the unit sphere $\Sf^3$ if $\varepsilon = 1$, whereas it denotes the hyperbolic space $\Hi^3$ if $\varepsilon = -1$. We show that they are cylinders over isoparametric surfaces in $\Qi^3$, filling an intriguing gap in the existing literature. We also prove that the hypersurfaces with constant principal curvatures of $\Qi^3\times\R$ are isoparametric. Furthermore, we provide the complete classification of the extrinsically homogeneous hypersurfaces in $\Qi^3 \times \Ri$.
\end{abstract}
	
\maketitle

\section{Introduction}

A non-constant smooth function $f: \widetilde{M}\to\R$, defined on a Riemannian manifold $\widetilde{M}$, is called isoparametric if there exist smooth functions $a,b:\R\to\R$ such
that
\begin{equation*}
|| \nabla f ||^2 = a(f) \quad \mbox{and} \quad \Delta f = b(f),
\end{equation*} 
and the regular level sets of $f$ are called isoparametric hypersurfaces of $\widetilde{M}$. Equivalently, a hypersurface $\Sigma$ of a Riemannian manifold $\widetilde{M}$ is isoparametric if $\Sigma$ and the nearby parallel hypersurfaces have constant mean curvature. 

Cartan showed that when the ambient space is a space form, a hypersurface is isoparametric if and only if its principal curvatures are constant \cite{isoCartan}. In ambient spaces of nonconstant curvature, this equivalence does not necessarily occur. On the one hand, we have in \cite{diaz2010inhomogeneous, diaz2013isoparametric, ge2015filtration, WangExIso} examples of isoparametric hypersurfaces that do not have constant principal curvatures. On the other hand, examples of hypersurfaces with constant principal curvatures that are not isoparametric can be found in  \cite{isoparametric-mcf, RVazquezExIso}. Nevertheless, the equivalence may still hold in ambient spaces other than space forms, as we can see in \cite{dominguez-manzano} for homogeneous 3-spaces, and in \cite{gao-2} for products of 2-dimensional space forms, concluding the classification started in \cite{s2r2Batalla, santos2022isoparametric}. For more details on isoparametric hypersurfaces, we refer to \cite{chi,notas-miguel, survey-thorbergsson}. 

In this paper, we consider isoparametric hypersurfaces and hypersurfaces with constant principal curvatures of the product spaces $\Qi^3\times\R$, where $\varepsilon \in \{1,-1\}$ and $\Qi^3$ denotes the unit sphere $\Sf^3$ if $\varepsilon = 1$, whereas it denotes the hyperbolic space $\Hi^3$ if $\varepsilon = -1$. Our goal is to classify the hypersurfaces of $\Qi^3\times\R$ with three distinct constant principal curvatures and provide the main consequences of this classification. The target dimension and number of distinct principal curvatures are motivated by the classification given by Chaves and Santos \cite{Rosa-and-Eliane} of the hypersurfaces of $\Qi^n\times\R$, $n \geq 2$,  with $g$ distinct constant principal curvatures, $g \in \{1,2,3\}$,  and $n\geq4$ if $g=3$. Chaves and Santos also proved that such hypersurfaces are isoparametric in those spaces. In this context, our first main result provides the classification of the hypersurfaces in $\Qi^3\times\R$ with $g=3$ constant principal curvatures. 

\begin{theorem} \label{theorem-classification}
Let $\Sigma$ be a connected hypersurface of $\Qi^3\times\R$ with three distinct constant principal curvatures. Then $\Sigma$ is an open part of the following hypersurfaces:
\begin{itemize}
\item [(a)] $\Sf^1(r_1)\times\Sf^1(r_2)\times\R$, when $\varepsilon =1$;
\item [(b)] $\Hi^1(r_1)\times\Sf^1(r_2)\times\R$, when $\varepsilon =-1$,
\end{itemize}
where $r_1 \neq r_2$ and $r_1^2 + \varepsilon r_2^2 = 1$. The principal curvatures of $\Sigma$ are $0$, $\dfrac{\varepsilon r_2}{r_1}$ and $-\dfrac{r_1}{r_2}$.
\end{theorem}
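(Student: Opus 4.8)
The plan is to use the standard structure equations for a hypersurface $\Sigma$ of $\Qi^3\times\R$. Writing $\partial_t = T + \nu\xi$ for the splitting of the unit vertical field into its tangent part $T$ and its normal part $\nu\xi$ (so that $\nu$ is the angle function and $\|T\|^2 + \nu^2 = 1$), one has the relations $\nabla_X T = \nu SX$ and $\nabla\nu = -ST$, where $S$ is the shape operator, together with the Codazzi equation
\[
(\nabla_X S)Y - (\nabla_Y S)X = \varepsilon\nu\big(\langle X, T\rangle Y - \langle Y, T\rangle X\big),
\]
which is the only place where the nonconstant curvature of the ambient space enters. Since $\Sigma$ is three-dimensional with three distinct principal curvatures $\lambda_1,\lambda_2,\lambda_3$, each eigenvalue is simple, and I would fix a local orthonormal frame $E_1,E_2,E_3$ of principal directions, writing $t_i=\langle T,E_i\rangle$. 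We may assume $T\neq 0$, since otherwise $\nu=\pm 1$ and $\Sigma$ is an open part of a totally geodesic slice $\Qi^3\times\{t_0\}$, which has all principal curvatures zero and is thus excluded.

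First I would feed this frame into the Codazzi equation. Because the $\lambda_i$ are constant, the terms $(\nabla_{E_i}S)E_j$ reduce to combinations of the connection coefficients $\langle\nabla_{E_i}E_j,E_k\rangle$ weighted by eigenvalue differences. Taking the $E_i$-component yields the key identity
\[
(\lambda_i-\lambda_j)\,\langle\nabla_{E_i}E_j,E_i\rangle = \varepsilon\nu\,t_j,\qquad i\neq j,
\]
which determines every coefficient $\langle\nabla_{E_i}E_j,E_i\rangle$ in terms of $\nu$ and the $t_j$, while the $E_k$-component with $i,j,k$ distinct gives homogeneous linear relations among the remaining coefficients $\langle\nabla_{E_i}E_j,E_k\rangle$ (those with all indices distinct). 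Simultaneously, expanding $\nabla_{E_k}T = \nu\lambda_k E_k$ and $E_k\nu = -\lambda_k t_k$ in this frame produces a closed first-order system for $(t_1,t_2,t_3,\nu)$ whose coefficients are the connection functions just computed.

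The heart of the argument is to prove that $\nu\equiv 0$. The remaining information is the Gauss equation: for the ambient $\Qi^3\times\R$ it gives the sectional curvatures $K_{ij} = \varepsilon(1-t_i^2-t_j^2)+\lambda_i\lambda_j$, whereas $K_{ij}$ is also expressible intrinsically through the connection coefficients and their derivatives. Matching the two and using the first-order system to eliminate the derivatives of the $t_i$ produces algebraic constraints on $\nu$, the $t_i$ and the $\lambda_i$. When $T$ is a principal direction the analysis collapses cleanly: the Codazzi identity above forces the transversal component of $T$ to be locally constant, whence $\nu\lambda=0$ along $T$, and the Gauss equation in the plane spanned by $T$ and a second principal direction then forces $\nu=0$ outright. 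I expect the genuine obstacle to be the case in which $T$ is \emph{not} a principal direction: there the coefficients with all indices distinct need not vanish, the first-order system is fully coupled, and one must carry out the complete integrability analysis, commuting the frame derivatives of the $t_i$ and $\nu$ and combining this with all three Gauss equations, to exclude a nonzero angle function. This over-determined bookkeeping is where the real work lies.

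Once $\nu\equiv 0$ is established, $T=\partial_t$ is a parallel unit tangent field, so $\Sigma$ is invariant under vertical translations and splits as a Riemannian product $\Sigma = \Sigma_0\times\R$, a vertical cylinder over a surface $\Sigma_0\subset\Qi^3$. Its principal curvatures are $0$ in the vertical direction together with the two principal curvatures of $\Sigma_0$; for $\Sigma$ to have three distinct constant principal curvatures, $\Sigma_0$ must itself have two distinct constant principal curvatures, hence be isoparametric in $\Qi^3$. Invoking the classification of isoparametric surfaces of $\Sf^3$ and $\Hi^3$ with two distinct constant principal curvatures then leaves exactly the flat tori $\Sf^1(r_1)\times\Sf^1(r_2)\subset\Sf^3$ and the products $\Hi^1(r_1)\times\Sf^1(r_2)\subset\Hi^3$, and a direct computation of their principal curvatures gives $\varepsilon r_2/r_1$ and $-r_1/r_2$ under the constraint $r_1^2+\varepsilon r_2^2=1$, completing the classification.
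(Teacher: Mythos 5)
You have the right scaffolding (the decomposition $\partial_t = T + \nu\xi$, the equations $\nabla_X T = \nu SX$, $\nabla\nu = -ST$, Codazzi, Gauss, and the dispatch of $T=0$), and your final reduction to a vertical cylinder plus Cartan's classification is essentially the paper's ending. But the entire substance of the theorem lies in the step you explicitly decline to carry out: excluding a nonzero angle function when $T$ is not a principal direction. Writing that "one must carry out the complete integrability analysis\dots this over-determined bookkeeping is where the real work lies" identifies the difficulty without resolving it; nothing in your sketch shows that the coupled system is actually inconsistent, and a priori it could admit solutions with non-constant $\nu$. The paper's resolution of exactly this point occupies Lemmas \ref{lemma-equations}--\ref{lemma-characterization} and is not a routine commutation of derivatives: Codazzi determines the connection coefficients in terms of the $b_i$, $\cos\theta$ and one residual function $A$; the three Gauss equations \eqref{gauss-mnmn} are then treated as a linear system and solved so that each $b_i^2$ becomes an explicit rational function of $t=\cos^2\theta$ with coefficients depending only on $\mu_1,\mu_2,\mu_3,\varepsilon$ (Lemma \ref{lemma-expressions-bi}); finally, computing $e_i(b_i^2)$ in two ways produces polynomial identities in $t$ which, if $t$ were non-constant, would have to vanish identically, and their leading (degree five) coefficients force $3\mu_1-2\mu_2-\mu_3=\mu_1-2\mu_2+\mu_3=\mu_1+2\mu_2-3\mu_3=0$, hence $\mu_1=\mu_2=\mu_3$, a contradiction. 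Some argument of this explicit algebraic nature is indispensable; without it you have a plan, not a proof.

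Two further inaccuracies in the part you do sketch. First, your claim that when $T$ is principal "the Gauss equation in the plane spanned by $T$ and a second principal direction then forces $\nu=0$ outright" is not correct as stated: the graphs over parallel horospheres in $\Hi^3\times\R$ (item (iii) of Theorem \ref{corollary-2}) have $T$ principal with curvature $0$, constant $\nu\neq 0$, and of course satisfy every Gauss equation; they are ruled out only because their two nonzero principal curvatures coincide. Accordingly, the paper does not argue this case by a single Gauss equation: once $\cos\theta$ is known to be constant, \eqref{Xcos} gives $ST=0$, and a nonzero constant angle is excluded by invoking the classification of constant-angle hypersurfaces with $T$ principal (Theorem 4.1 of Chaves--Santos), whose nonzero-angle examples have only $g=2$ distinct curvatures --- so the hypothesis $g=3$ must be used in an essential way your sketch leaves implicit. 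Second, in the last step the base surface must have principal curvatures that are \emph{nonzero} as well as distinct (otherwise the cylinder has only two distinct curvatures); this happens to be automatic from Cartan's classification, but it should be stated. These are secondary; the missing algebraic core described above is the genuine gap.
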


Theorem \ref{theorem-classification} completes the classification of the hypersurfaces of $\Qi^n\times\R$ that have $g$ distinct constant principal curvatures, $g \in \{1,2,3\}$, started in \cite[Theorem 6.1]{Rosa-and-Eliane}, by providing the missing piece given by the case $n=g=3$. It is worth mentioning that the problem for $g \geq 4$ remains
open. A fundamental step in the proof of Theorem \ref{theorem-classification} consists in proving that the normal direction of a connected hypersurface of $\Qi^3\times\R$ with three distinct constant principal curvatures makes a constant angle $\theta$ with the factor $\R$ (Lemma \ref{lemma-characterization}). This property is also a key step for the proof of our next result.

\begin{theorem}\label{Corollary}
Let $\Sigma$ be a connected hypersurface of $\Qi^3 \times \Ri$ with constant principal curvatures. Then  $\Sigma$ is isoparametric.
\end{theorem}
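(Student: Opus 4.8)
The plan is to reduce everything to classification results that are already available, splitting according to the number $g$ of distinct principal curvatures. Since $\Sigma$ is a hypersurface of the four-dimensional manifold $\Qi^3\times\R$, it has dimension three, so its constant principal curvatures realize at most three distinct values, i.e. $g\in\{1,2,3\}$. I would handle $g\le 2$ by citation and $g=3$ by invoking Theorem~\ref{theorem-classification}. For $g\in\{1,2\}$, the hypersurfaces of $\Qi^n\times\R$ with $g$ distinct constant principal curvatures were classified by Chaves and Santos for every $n\ge 2$, hence in particular for $n=3$, and they established that each such hypersurface is isoparametric \cite{Rosa-and-Eliane}. These two cases therefore require nothing beyond quoting that result.

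The substance is the case $g=3$. Here Theorem~\ref{theorem-classification} identifies $\Sigma$, up to an open subset, as a cylinder $N\times\R$, where $N=\Sf^1(r_1)\times\Sf^1(r_2)\subset\Sf^3$ if $\varepsilon=1$ and $N=\Hi^1(r_1)\times\Sf^1(r_2)\subset\Hi^3$ if $\varepsilon=-1$, with $r_1^2+\varepsilon r_2^2=1$. The two nonzero principal curvatures $\varepsilon r_2/r_1$ and $-r_1/r_2$ of $\Sigma$ are precisely the principal curvatures of $N$ inside the space form $\Qi^3$, while the vanishing one corresponds to the flat $\R$-direction tangent to $\Sigma$. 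Consequently $N$ has constant principal curvatures in the space form $\Qi^3$, so by Cartan's theorem $N$ is an isoparametric surface of $\Qi^3$; I would then fix an isoparametric function $\phi\colon\Qi^3\to\R$ having $N$ as a regular level set, with $\|\nabla\phi\|^2=a(\phi)$ and $\Delta\phi=b(\phi)$.

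It then remains to promote $\phi$ to an isoparametric function on the product. Writing $\pi\colon\Qi^3\times\R\to\Qi^3$ for the projection and setting $\tilde\phi=\phi\circ\pi$, the product structure gives $\|\nabla\tilde\phi\|^2=\|\nabla\phi\|^2\circ\pi=a(\tilde\phi)$ and $\Delta\tilde\phi=(\Delta\phi)\circ\pi=b(\tilde\phi)$, since $\tilde\phi$ is independent of the $\R$-coordinate and the Laplacian of a Riemannian product splits as a sum over its factors. Thus $\tilde\phi$ is isoparametric on $\Qi^3\times\R$ and $\Sigma\subseteq N\times\R=\tilde\phi^{-1}(c)$ is an open part of a regular level set, which makes $\Sigma$ isoparametric.

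I expect the only delicate point to be conceptual rather than computational: the argument hinges on already having Theorem~\ref{theorem-classification} for $g=3$ together with the Chaves–Santos classification for $g\le 2$, so the genuine difficulty has effectively been displaced into those inputs. The remaining verification—that a cylinder over an isoparametric hypersurface of a space form is isoparametric in the product—is the routine pullback computation above; the one thing I would check carefully is that $\pi$ is a Riemannian submersion, so that gradients lift isometrically and the Laplacian decomposes, which is immediate for the product metric.
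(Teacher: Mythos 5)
Your proposal is correct, but for the case $g=3$ it takes a genuinely different route from the paper. The paper never invokes Theorem~\ref{theorem-classification} in this proof: it quotes Corollary~3.4 of \cite{Rosa-and-Eliane}, which says that a hypersurface of $\Qi^n\times\R$ with $\cos\theta$ constant and different from $1$ is isoparametric if and only if it has constant principal curvatures, so the entire proof reduces to establishing that $\cos\theta$ is constant (plus the observation that slices, the $\cos\theta=1$ case, are isoparametric). Constancy of the angle comes from \cite[Theorem 6.1]{Rosa-and-Eliane} when $g\in\{1,2\}$ and from Lemma~\ref{lemma-characterization} when $g=3$. You instead use the full classification for $g=3$ and then verify isoparametricity by hand: the base surface $N$ has constant principal curvatures in $\Qi^3$, hence is isoparametric by Cartan, and the pullback $\tilde\phi=\phi\circ\pi$ of an isoparametric function through the projection $\pi\colon\Qi^3\times\R\to\Qi^3$ satisfies $\|\nabla\tilde\phi\|^2=a(\tilde\phi)$ and $\Delta\tilde\phi=b(\tilde\phi)$ because the gradient lifts horizontally and the Laplacian splits over the product; this computation is correct, as is the reduction $g\in\{1,2,3\}$ and the citation of Chaves--Santos for $g\le 2$. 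There is no circularity, since Theorem~\ref{theorem-classification} is proved from Lemma~\ref{lemma-characterization} without using Theorem~\ref{Corollary}. What the two approaches buy: the paper's argument is leaner and more uniform --- it needs only the angle lemma and a ready-made equivalence, and works without knowing what the hypersurfaces actually are; yours uses a strictly stronger input (the classification, which itself rests on the angle lemma plus \cite[Theorem 4.1]{Rosa-and-Eliane} and Cartan's classification), but in exchange the final step is elementary, explicit, and independent of Corollary~3.4 of \cite{Rosa-and-Eliane}. One small point worth making explicit in your write-up: Theorem~\ref{theorem-classification} only identifies $\Sigma$ as an \emph{open part} of a cylinder, so you should note that an open subset of a regular level set of an isoparametric function is still isoparametric (immediate under the equivalent definition via constant mean curvature of nearby parallel hypersurfaces).
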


As consequence of Theorem \ref{theorem-classification} and \cite[Theorem 6.1]{Rosa-and-Eliane}, we have the classification of the extrinsically homogeneous hypersurfaces in $\Qi^3 \times \Ri$. Recall that a hypersurface $\Sigma$ in a Riemannian manifold $\widetilde{M}$ is an extrinsically homogeneous hypersurface if $\Sigma$ is an orbit of codimension one of a subgroup of the isometry group of $\widetilde{M}$. The classification is given in our third main result.

\begin{theorem} \label{corollary-2}
Let $\Sigma$ be an extrinsically homogeneous hypersurface in $\Qi^3 \times \Ri$. Then $\Sigma$ is one of the following:
\begin{enumerate}[(i)]
\item a horizontal slice $\Qi^3 \times \{t_0\}$;
\item a vertical cylinder $\Sigma_0 \times \Ri$, where $\Sigma_0$ is either 
\begin{enumerate}[(a)]
\item a totally geodesic surface in $\Qi^3$;
\item a totally umbilical surface in $\Qi^3$;
\item $\Sf^1(r_1)\times\Sf^1(r_2) \subset \Sf^3$, with $r_1^2+r_2^2 =1$;
\item $\Hi^1(r_1)\times\Sf^1(r_2) \subset \Hi^3$, with $r_1^2-r_2^2 =1$;
\end{enumerate}
\item $\{(\textnormal{exp}_p (s \eta(p)), B s) \ | \ p \in \mathcal H, \ s \in \R\} \subset \Hi^3 \times \Ri$, where $\mathcal{H} \subset \Hi^3$ is a horosphere with unit normal vector field $\eta$ and $B$ is a real constant.
\end{enumerate}
\end{theorem}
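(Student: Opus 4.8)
The plan is to reduce the statement to the classification of hypersurfaces with constant principal curvatures, using the elementary fact that extrinsic homogeneity forces the principal curvatures to be constant. Suppose $\Sigma=G\cdot p_0$ is an orbit of codimension one of a subgroup $G\subset\mathrm{Iso}(\Qi^3\times\R)$. Given any $q\in\Sigma$, homogeneity provides $\phi\in G$ with $\phi(p_0)=q$; since $\phi$ preserves $\Sigma$, its differential sends a unit normal of $\Sigma$ at $p_0$ to a unit normal at $q$ and carries the shape operator at $p_0$ to the shape operator at $q$. Hence the principal curvatures of $\Sigma$ are the same at $p_0$ and at $q$, and as $q$ is arbitrary they are constant.

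Since $\dim\Sigma=3$, the shape operator has at most three distinct eigenvalues, so the number $g$ of distinct principal curvatures lies in $\{1,2,3\}$. I would then invoke the classification of the constant principal curvature hypersurfaces of $\Qi^3\times\R$: for $g=3$ this is precisely Theorem \ref{theorem-classification}, yielding the flat cylinders $\Sf^1(r_1)\times\Sf^1(r_2)\times\R$ and $\Hi^1(r_1)\times\Sf^1(r_2)\times\R$ of items (ii)(c)--(d); for $g\in\{1,2\}$ this is \cite[Theorem 6.1]{Rosa-and-Eliane}, whose hypotheses permit $n=3$ in these cases, yielding the slices of (i), the vertical cylinders over totally geodesic or totally umbilical surfaces of $\Qi^3$ of items (ii)(a)--(b), and, when $\varepsilon=-1$, the tilted horosphere hypersurface (iii). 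This already places $\Sigma$ in the stated list and settles the implication.

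To see that the list is sharp, i.e.\ that each model really is an orbit of codimension one, note first that for a slice $\Qi^3\times\{t_0\}$ and for a vertical cylinder $\Sigma_0\times\R$ this is transparent: every isoparametric surface $\Sigma_0\subset\Qi^3$ arising here (a totally geodesic surface, geodesic sphere, horosphere, equidistant surface, or flat torus) is itself an orbit of some $H\subset\mathrm{Iso}(\Qi^3)$, and then $H$ together with the translations of the $\R$-factor acts transitively on the corresponding slice or cylinder. The delicate case is (iii). Here I would take the parabolic subgroup of $\mathrm{Iso}(\Hi^3)$ fixing the point at infinity determined by $\mathcal H$, which acts transitively on each parallel horosphere $\{\exp_p(s\eta(p)):p\in\mathcal H\}$ and commutes with the normal flow, and couple it with the one-parameter group of hyperbolic translations toward that point at infinity, synchronized with the $\R$-translations $t\mapsto t+Bc$, so that the resulting group sends $(\exp_p(s\eta(p)),Bs)$ to $(\exp_{p'}((s+c)\eta(p')),B(s+c))$ and thus preserves the hypersurface transitively. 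I expect the construction and verification of this coupled one-parameter group to be the only genuine technical point; the implication ``homogeneous $\Rightarrow$ constant principal curvatures'' and the appeal to the classification are routine.
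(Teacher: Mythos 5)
Your proposal is correct and follows essentially the same route as the paper: extrinsic homogeneity forces constant principal curvatures, the classification results (Theorem \ref{theorem-classification} for $g=3$ and \cite[Theorem 6.1]{Rosa-and-Eliane} for $g\in\{1,2\}$) yield the list, and each model is then verified to be an orbit, with case (iii) as the only delicate verification. Your coupled parabolic/hyperbolic-translation group for (iii) is precisely the group the paper writes down in the half-space model, namely $(u,v,w,t)\mapsto(u+\lambda_1,v+\lambda_2,w,t)$ together with $(u,v,w,t)\mapsto(e^{\lambda}u,e^{\lambda}v,e^{\lambda}w,t-B\lambda)$.
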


The hypersurface given in item (iii) of Theorem \ref{corollary-2} is an entire graph over a family of parallel horospheres in $\Hi^3$. If we use the half-space model for $\Hi^3$, this hypersurface is given by $\{(u,\,v,\,w, B \log (w)) \, | \, u,v\in\R, \, w>0 \} \subset \Hi^3 \times \Ri$. The surface $\{(u,\,w, B \log(w)) \, | \, u \in \R, \, w>0 \}$ in $\Hi^2 \times \Ri$ is called a parabolic helicoid and it is precisely one of the homogeneous surfaces in $\Hi^2 \times \Ri$, as classified by Dom\'inguez-V\'azquez and Manzano in \cite{dominguez-manzano}.

\section{The main Lemma}

In this section we establish our main Lemma, which states that a connected hypersurface of $\Qi^3 \times \Ri$ with distinct constant principal curvatures has constant angle function (Lemma \ref{lemma-characterization}). Using this characterization, we will then prove Theorem \ref{theorem-classification} and Theorem \ref{Corollary} in the next section.

Given a hypersurface $\Sigma$ of $\Qi^{n}\times\R$ with a unit normal vector field $N$, let $S$ be the shape operator of $\Sigma$  and let $\nabla$ be the Levi-Civita connection of $\Sigma$. Since $\partial_{t}$ is a unit vector field, it follows that $||T||^2 + \cos^2\theta=1$. Moreover, since $\partial_t$ is parallel in $\Qi^n\times\Ri$, by differentiating it and using the decomposition $\partial_{t} = T + \cos\theta\,N$, we obtain that
\begin{align}
& \nabla_{X}T = \cos\theta \, SX, \label{T-deriv} \\
& X(\cos\theta) = -\langle X,ST\rangle \label{Xcos}
\end{align}
for all $X\in T\Sigma$.

Let $R$ be the curvature tensor of $\Sigma$ defined as $R(X,Y)Z =\nabla_X \nabla_Y Z - \nabla_Y \nabla_X Z - \nabla_{[X,Y]}Z$, then the Gauss and Codazzi equations are given by
\begin{equation} \label{gauss}
\begin{aligned}
\langle R(X,Y)Z,W\rangle = \ & \varepsilon(\langle X,W\rangle
\langle Y,Z\rangle-\langle X,Z\rangle\langle Y,W\rangle \\
&+ \langle X,T\rangle\langle Z,T\rangle\langle Y,W\rangle+
\langle Y,T\rangle\langle W,T\rangle\langle X,Z\rangle \\
& - \langle Y,T\rangle\langle Z,T\rangle\langle X,W\rangle-
\langle X,T\rangle\langle W,T\rangle\langle Y,Z\rangle) \\
& + \langle SX,W\rangle\langle SY,Z\rangle-\langle SX,Z\rangle
\langle SY,W\rangle
\end{aligned}
\end{equation}
and
\begin{equation}\label{codazzi}
\nabla_{X}(SY)-\nabla_{Y}(SX)-S[X,Y] = 
\varepsilon\cos\theta(\langle Y,T\rangle X-\langle X,T\rangle Y)
\end{equation}
respectively, where $X,Y,Z,W\in T\Sigma$.

\begin{lemma}\label{lemma-equations}
Let $\Sigma$ be a hypersurface of $\Qi^3 \times \Ri$ with three distinct constant principal curvatures and angle function $\theta$. Let $\{e_1,e_2,e_3\}$ be a local frame of orthonormal principal directions defined on $\Omega \subset \Sigma$ and denote by $\mu_{i}$ the principal curvature associated to $e_i$ for $i \in \{1,2,3\}$. If we define $b_1, \, b_2, \, b_3 : \Omega \rightarrow \R$ by
\begin{equation} \label{T-decomp}
T = \displaystyle \sum_{i=1}^3 b_i e_i, 
\end{equation}
then there exists a function $A: \Omega \to \R$ such that, for all distinct indices $l,\,m,n \in \left\{1,\,2,\,3 \right\}$,
\begin{equation}
e_n(\cos\theta) = - \mu_n b_n, \label{cos-deriv}
\end{equation}
\begin{equation}
e_n(b_n) = \left[ \mu_n - \varepsilon \left( \dfrac{b_l^2}{\mu_l-\mu_n} + \dfrac{b_m^2}{\mu_m-\mu_n}  \right) \right] \cos\theta, \label{en-bn} 
\end{equation}
\begin{equation}
\dfrac{2A^2\cos^2\theta}{(\mu_m-\mu_l)(\mu_l-\mu_n)} + \dfrac{2(b_n^2+b_m^2)\cos^2\theta}{(\mu_n-\mu_m)^2} +\dfrac{\varepsilon(\mu_nb_n^2-\mu_mb_m^2)}{\mu_m-\mu_n} + 2 \varepsilon \cos^2\theta +  \varepsilon b_l^2 + \mu_m \mu_n = 0. \label{gauss-mnmn}
\end{equation}

\end{lemma}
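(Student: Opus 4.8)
The plan is to work throughout in the orthonormal principal frame $\{e_1,e_2,e_3\}$ and to introduce the connection coefficients $\Gamma_{ij}^k := \langle \nabla_{e_i}e_j, e_k\rangle$, which are antisymmetric in the last two indices because the frame is orthonormal. Equation \eqref{cos-deriv} is then immediate: applying \eqref{Xcos} to $X = e_n$ and using $ST = \sum_i \mu_i b_i e_i$ gives $e_n(\cos\theta) = -\langle e_n, ST\rangle = -\mu_n b_n$.

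The heart of the argument is to extract the coefficients $\Gamma_{ij}^k$ from the Codazzi equation \eqref{codazzi}. Since the $\mu_i$ are constant, $\nabla_{e_i}(Se_j) = \mu_j \nabla_{e_i}e_j$, so evaluating \eqref{codazzi} on $(e_i,e_j)$ and taking the $e_k$-component turns it into the linear relation $(\mu_j-\mu_k)\Gamma_{ij}^k - (\mu_i-\mu_k)\Gamma_{ji}^k = \varepsilon\cos\theta(b_j\delta_{ik} - b_i\delta_{jk})$. Splitting into cases according to coincidences among $i,j,k$ yields two conclusions. First, for $i\neq j$ the choice $k=i$ gives $\Gamma_{ij}^i = \varepsilon\cos\theta\, b_j/(\mu_j-\mu_i)$. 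Second, for pairwise distinct $i,j,k$ one gets $(\mu_j-\mu_k)\Gamma_{ij}^k = (\mu_i-\mu_k)\Gamma_{ji}^k$, which shows that all the fully off-diagonal coefficients are proportional to one another, with ratios fixed by the $\mu_i$. This is exactly what lets us introduce a single function $A$: normalizing by $\cos\theta$, we set $\Gamma_{ij}^k = A\cos\theta/(\mu_j-\mu_k)$ for distinct $i,j,k$, the consistency of this definition across permutations being guaranteed by the relation just derived. Equation \eqref{en-bn} now follows from \eqref{T-deriv}: expanding $\nabla_{e_n}T = \sum_i\big(e_n(b_i) + \sum_j b_j\Gamma_{nj}^i\big)e_i$ and comparing its $e_n$-component with that of $\cos\theta\, Se_n = \mu_n\cos\theta\, e_n$ gives $e_n(b_n) + \sum_{i\neq n} b_i\Gamma_{ni}^n = \mu_n\cos\theta$, into which we substitute $\Gamma_{ni}^n = \varepsilon\cos\theta\, b_i/(\mu_i-\mu_n)$.

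Finally, \eqref{gauss-mnmn} comes from computing $\langle R(e_m,e_n)e_n,e_m\rangle$ in two ways. On one side, the Gauss equation \eqref{gauss} evaluated at $(e_m,e_n,e_n,e_m)$ gives $\varepsilon(1 - b_m^2 - b_n^2) + \mu_m\mu_n$, which, using $\|T\|^2 = 1-\cos^2\theta$, equals $\varepsilon(\cos^2\theta + b_l^2) + \mu_m\mu_n$. On the other side, expanding $R(e_m,e_n)e_n$ from its definition produces the derivative terms $e_m(\Gamma_{nn}^m)$ and $e_n(\Gamma_{mn}^m)$ together with quadratic expressions in the $\Gamma_{ij}^k$. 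The derivative terms are eliminated using \eqref{cos-deriv} and \eqref{en-bn}; the products of two fully off-diagonal coefficients collapse, via a partial-fraction identity in the $\mu_i$, into the single term $2A^2\cos^2\theta/((\mu_m-\mu_l)(\mu_l-\mu_n))$, while the products involving the $\Gamma_{ij}^i$ give the term $2(b_n^2+b_m^2)\cos^2\theta/(\mu_n-\mu_m)^2$. Equating the intrinsic value with the Gauss value and rearranging yields \eqref{gauss-mnmn}.

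I expect the main obstacle to be this last bookkeeping step: correctly expanding the intrinsic curvature, tracking the signs and the many factors $1/(\mu_i-\mu_j)$, and verifying the algebraic identities that merge the off-diagonal products into a single $A^2$-term and the diagonal products into the stated $b^2$-term. The Codazzi case analysis and the two structure-equation computations are comparatively routine; the delicate point is ensuring the $\mu_i$-denominators combine consistently, so that one and the same function $A$ makes \eqref{gauss-mnmn} hold for every choice of the distinguished index.
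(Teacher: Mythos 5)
Your proposal is correct and is essentially the paper's own proof: the paper likewise works with the connection forms $\omega_j^k(e_i)=\langle\nabla_{e_i}e_j,e_k\rangle$ (your $\Gamma_{ij}^k$), extracts from Codazzi both the relation $\omega_m^n(e_n)=\varepsilon b_m\cos\theta/(\mu_m-\mu_n)$ and the symmetry $(\mu_m-\mu_l)\omega_m^l(e_n)=(\mu_n-\mu_l)\omega_n^l(e_m)$ that produces the single function $A$, then gets \eqref{en-bn} by comparing the $e_n$-component of $\nabla_{e_n}T=\mu_n\cos\theta\,e_n$, and \eqref{gauss-mnmn} by equating the intrinsic expansion of the curvature with the Gauss equation, with the off-diagonal products collapsing by exactly the partial-fraction identity you describe. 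The only differences are cosmetic: you compute $\langle R(e_m,e_n)e_n,e_m\rangle$ rather than $\langle R(e_m,e_n)e_m,e_n\rangle$, and your $A$ absorbs a factor $\varepsilon$, both harmless since only $A^2$ appears in \eqref{gauss-mnmn}.
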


\begin{proof}
Equation \eqref{cos-deriv} is a direct consequence of \eqref{Xcos}. If $\nabla$ denotes the Levi-Civita connection of $\Sigma$, then
\begin{equation}
\nabla_{e_i} e_j = \displaystyle \sum_{k=1}^3 \omega_j^k(e_i) e_k, \label{levi-decomp}
\end{equation}
where $\omega_j^k$ are the connection forms of $\Sigma$. In what follows, $i$, $j$ and $k$ will denote arbitrary indices in $\{1,2,3\}$, while $l$, $m$ and $n$ will denote distinct indices in $\{1,2,3\}$, as in the formulation of the lemma.
		
The Codazzi equation \eqref{codazzi} for $X=e_i$ and $Y=e_j$, with $i \neq j$, together with \eqref{T-decomp} and \eqref{levi-decomp}, gives
$$
\displaystyle \sum_{k=1}^3 \left[\left(\mu_j-\mu_k\right) \omega^k_j(e_i)-\left(\mu_i-\mu_k\right) \omega^k_i(e_j)  \right] e_k = \varepsilon \cos\theta \left(b_j e_i - b_i e_j\right).
$$
Considering the coefficients of each principal direction we conclude that
$$
\begin{array}{rcl}
(\mu_m - \mu_n) \omega_m^n(e_n) &=& \varepsilon b_m\cos\theta,  \\
\left(\mu_m-\mu_l\right) \omega^l_m(e_n)&=&\left(\mu_n-\mu_l\right) \omega^l_n(e_m).
\end{array}
$$
Consequently, 
\begin{equation}
\omega_m^n(e_n) = \dfrac{\varepsilon b_m\cos\theta}{\mu_m - \mu_n}, \label{omega-mn} 
\end{equation}
and there is a smooth function $A : \Omega \rightarrow \mathbb{R}$ such that
\begin{equation}
\omega^l_m(e_n)=\dfrac{\varepsilon A \cos\theta}{\mu_m-\mu_l}. \label{omega-mnl}
\end{equation}

To obtain equation \eqref{en-bn} we first observe that, from equations \eqref{T-deriv} and \eqref{T-decomp}, we have
$$
\mu_n\cos\theta \, e_n = \displaystyle \sum_{i=1}^3 \left[ e_n(b_i) + \sum_{k=1}^3 b_k \omega_k^i(e_n) \right] e_i.
$$
Then, using equations \eqref{omega-mn} and \eqref{omega-mnl}, we conclude \eqref{en-bn}. 

To obtain \eqref{gauss-mnmn}, we will use the Gauss equation \eqref{gauss}. On the one hand, note that the bracket between principal directions is given by
$$
\left[ e_m, \, e_n \right] = \displaystyle \sum_{k=1}^3 \left( \omega_n^k(e_m)-\omega_m^k(e_n) \right) e_k
= \omega_n^m(e_m) e_m - \omega_m^n(e_n) e_n + \left( \omega_n^l(e_m)-\omega_m^l(e_n) \right) e_l, \\
$$
which implies
\begin{align*}
\langle R(e_m, e_n) e_m, e_n \rangle = \ & \omega_m^l(e_n) \omega_l^n(e_m) + e_m(\omega_m^n(e_n))-\omega_m^l(e_m)\omega_l^n(e_n)-e_n(\omega_m^n(e_m)) \\
& +(\omega_n^m(e_m))^2 + (\omega_m^n(e_n))^2-\omega_n^l(e_m)\omega_m^n(e_l) + \omega_m^l(e_n) \omega_m^n(e_l) \\
= \ & \dfrac{2A^2\cos^2\theta}{(\mu_m-\mu_l)(\mu_l-\mu_n)} + \dfrac{\varepsilon(\mu_nb_n^2-\mu_mb_m^2)}{\mu_m-\mu_n} + \varepsilon \cos^2\theta + \dfrac{2 (b_n^2+b_m^2)\cos^2\theta}{(\mu_n-\mu_m)^2},
\end{align*}
where we used \eqref{cos-deriv}, \eqref{en-bn}, \eqref{omega-mn} and \eqref{omega-mnl}. On the other hand, by Gauss equation \eqref{gauss}, we have $$
\langle R(e_m,e_n)e_m,e_n \rangle = \varepsilon (b_m^2+b_n^2-1)-\mu_m \mu_n.$$
By combining the two previous equations, we can conclude \eqref{gauss-mnmn}.
\end{proof}

\begin{lemma}\label{lemma-expressions-bi}
Let $\Sigma$ be a hypersurface of $\Qi^3 \times \Ri$ with three distinct constant principal curvatures and angle function $\theta$. Let $b_i : \Omega \subset \Sigma \rightarrow \mathbb{R}$ be the functions defined by \eqref{T-decomp} and set $t=\cos^2 \theta$. Then
\begin{align}\label{b_i^2}
b_i^2(t) = \dfrac{p_i(t)}{q(t)},
\end{align}
where $p_i(t)$ and $q(t)$ are polynomials in the variable $t$, given by
\begin{align*}
p_1(t) &= -2(\mu_1 - \mu_2)^4(3(\mu_1 - \mu_3)^2 + 3(\mu_2 - \mu_3)^2 + (\mu_1 - \mu_2)^2)\,t^3 + Q_1(t), \\
p_2(t) &= 12(\mu_1 - \mu_2)^4(\mu_1 - \mu_3)(\mu_2 - \mu_3)\,t^3 + Q_2(t), \\
p_3(t) &= -2(\mu_2 - \mu_3)(\mu_1 - \mu_2)^3(3(\mu_1 - \mu_2)^2 + 3(\mu_1 - \mu_3)^2 + (\mu_2 - \mu_3)^2)\,t^3 + Q_3(t), \\
q(t) &= 2(\mu_1 - \mu_3)(\mu_1 - \mu_2)^3((\mu_1 - \mu_3)^2 + 3(\mu_1 - \mu_2)^2 + 3(\mu_2 - \mu_3)^2)\,t^2 + L(t),
\end{align*}
where $Q_i$ are quadratic polynomials and $L$ is a degree one polynomial.
\end{lemma}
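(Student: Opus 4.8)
The plan is to read \eqref{gauss-mnmn} as a system of \emph{linear} equations in the quantities $b_1^2,\,b_2^2,\,b_3^2$ and $A^2$, with coefficients that are affine in $t=\cos^2\theta$, and to solve it by Cramer's rule (no use of the differential relations \eqref{cos-deriv} is needed here, only the algebraic Gauss identity and the normalization). Writing \eqref{gauss-mnmn} for each choice of the distinguished index $l\in\{1,2,3\}$ — the equation being symmetric in the remaining pair $m,n$, and $A$ being a single well-defined function by \eqref{omega-mnl} — produces three equations; adjoining the normalization $b_1^2+b_2^2+b_3^2=1-t$, which is just $\|T\|^2+\cos^2\theta=1$ rewritten via \eqref{T-decomp}, gives a fourth. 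Setting $\alpha=A^2$ and grouping the six terms of \eqref{gauss-mnmn} by the unknowns $(\alpha,b_1^2,b_2^2,b_3^2)$, every coefficient has degree at most one in $t$: the $\alpha$-coefficient is $\tfrac{2t}{(\mu_m-\mu_l)(\mu_l-\mu_n)}$, the $b_l^2$-coefficient is the constant $\varepsilon$, and the $b_m^2,\,b_n^2$-coefficients combine the degree-one term $\tfrac{2t}{(\mu_m-\mu_n)^2}$ with a constant; the right-hand side is $-2\varepsilon t-\mu_m\mu_n$, while the normalization contributes the constant row $(0,1,1,1)$ with right-hand side $1-t$.

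First I would record the $4\times4$ coefficient matrix $M(t)=M_0+t\,M_1$ and the vector $\mathbf c(t)$, and observe two structural facts. Because the normalization row carries no $t$, the matrix $M_1$ has a zero row, so $\det M(t)$ has degree at most three. Because the three Gauss equations carry $\alpha$ only through the factor $t$, the entire first column of $M_0$ vanishes, whence $\det M(0)=0$; the same holds for each matrix $M_i$ obtained by replacing an unknown's column with $\mathbf c(t)$, since that replacement never touches the $\alpha$-column. Thus $t$ divides both $\det M$ and every $\det M_i$, and Cramer's rule gives
\begin{equation*}
b_i^2=\frac{\det M_i}{\det M}=\frac{t\,p_i(t)}{t\,q(t)}=\frac{p_i(t)}{q(t)},
\end{equation*}
with $q=\det M/t$ and $p_i=\det M_i/t$. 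This single cancellation of the factor $t$ is precisely what lowers the expected degrees from $3$ and $4$ to the asserted $2$ and $3$.

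It then remains to pin down the degrees and leading coefficients. Extracting the top-degree part of the determinant of an affine matrix pencil amounts to evaluating the determinant of the matrix whose three non-constant rows are taken from $M_1$ and whose last row is taken from $M_0$; carrying this out (and using $\mu_1-\mu_3=(\mu_1-\mu_2)+(\mu_2-\mu_3)$ to simplify) shows that the $t^3$-coefficient of $\det M$ is a nonzero constant multiple of $\prod_{i<j}(\mu_i-\mu_j)^{-2}$, so $\deg q=2$ exactly; the analogous computation for the $M_i$ gives $\deg p_i=3$. Finally, multiplying numerator and denominator by a common $t$-independent polynomial in the $\mu_i$ clears the $(\mu_i-\mu_j)$-denominators without changing the ratio or the $t$-degrees, and normalizing in this way produces the polynomials $p_i$ and $q$ with the displayed leading coefficients and with lower-order remainders $Q_i$ (quadratic) and $L$ (linear).

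The main obstacle is purely computational: expanding the four determinants and, above all, tracking the exact leading coefficients and the precise shape of the remainders $Q_i$ and $L$ after clearing denominators. The qualitative degree count is forced by the two clean observations above (the zero normalization row bounds the degree, and the vanishing of the $\alpha$-column at $t=0$ supplies the common factor $t$), but verifying the displayed coefficients such as $2(\mu_1-\mu_3)(\mu_1-\mu_2)^3\big((\mu_1-\mu_3)^2+3(\mu_1-\mu_2)^2+3(\mu_2-\mu_3)^2\big)$ requires a careful and rather lengthy direct calculation, most safely checked with a symbolic computation.
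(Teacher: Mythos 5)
Your structural setup is valid, and it is essentially the paper's own computation reorganized. The paper also reads \eqref{gauss-mnmn} --- written out for the three choices of $l$ as \eqref{gauss-1212}, \eqref{gauss-2323}, \eqref{gauss-1313} --- together with the normalization $b_1^2+b_2^2+b_3^2=1-t$ as an algebraic linear system (no differential relations are used there either); the difference is that the paper eliminates $A^2$ by solving \eqref{gauss-1212} for it and substituting, eliminates $b_3^2$ via the normalization, and then inverts the resulting $2\times 2$ system \eqref{linearsystemb1b2-1}--\eqref{linearsystemb1b2-2}, whereas you keep all four unknowns and apply Cramer's rule to the $4\times 4$ pencil $M_0+tM_1$. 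Your two observations --- the constant normalization row bounds $\deg\det M$ by $3$, and the fact that the $\alpha$-column is a pure multiple of $t$ forces $t\mid\det M$ and $t\mid\det M_i$ --- are correct, and they explain the degree drop to $\deg q\le 2$, $\deg p_i\le 3$ more transparently than the paper's successive substitutions do. A small bonus of your route: the identity $\det M\cdot x_i=\det M_i$ holds for any solution by multilinearity, so no invertibility discussion is needed to get $b_i^2\,q(t)=p_i(t)$ pointwise; one only needs $q\not\equiv 0$ to write \eqref{b_i^2}.

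The gap is that you stop exactly where the lemma's real content begins. The statement does not merely assert $b_i^2=p_i(t)/q(t)$ with $\deg p_i\le 3$ and $\deg q\le 2$; it asserts the specific leading coefficients, e.g. $-2(\mu_1-\mu_2)^4\bigl(3(\mu_1-\mu_3)^2+3(\mu_2-\mu_3)^2+(\mu_1-\mu_2)^2\bigr)$ for $p_1$ and $2(\mu_1-\mu_3)(\mu_1-\mu_2)^3\bigl((\mu_1-\mu_3)^2+3(\mu_1-\mu_2)^2+3(\mu_2-\mu_3)^2\bigr)$ for $q$. These exact expressions are what the proof of Lemma \ref{lemma-characterization} consumes: the $t^5$-coefficients \eqref{coefficient-1}, \eqref{coefficient-2}, \eqref{coefficient-3} are computed from precisely these leading terms, and their vanishing yields the system \eqref{systemequations1}--\eqref{systemequations3} and the contradiction $\mu_1=\mu_2=\mu_3$; with unspecified \emph{nonzero constant multiples} in place of the displayed coefficients, that argument cannot be run. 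Your proposal defers exactly this verification to a symbolic computation, but extracting the leading terms and factoring them --- which is where the distinctness of the $\mu_i$ enters, through identities such as $(\mu_3-\mu_2)^3+(\mu_2-\mu_1)^3+(\mu_1-\mu_3)^3=3(\mu_1-\mu_2)(\mu_1-\mu_3)(\mu_2-\mu_3)$ --- is the nontrivial work of the lemma, and it is precisely the computation the paper's proof carries out by hand. As written, your argument establishes only the weaker qualitative statement about degrees, not the lemma.
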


\begin{proof}
After setting $t=\cos^2\theta$, we obtain the following equations from \eqref{gauss-mnmn}:
\begin{align}\label{gauss-1212}
\dfrac{2A^2t}{(\mu_3-\mu_1)(\mu_3-\mu_2)} = \dfrac{\varepsilon(\mu_1b_1^2-\mu_2b_2^2)}{\mu_2-\mu_1} +  \dfrac{2(b_1^2+b_2^2)t}{(\mu_2-\mu_1)^2} + 2\varepsilon t + \mu_1\mu_2 + \varepsilon b_3^2,
\end{align}
\begin{align}\label{gauss-2323}
\dfrac{2A^2t}{(\mu_2-\mu_1)(\mu_3-\mu_1)} = \dfrac{\varepsilon(\mu_2b_2^2-\mu_3b_3^2)}{\mu_3-\mu_2} +  \dfrac{2 (b_2^2+b_3^2)t}{(\mu_3-\mu_2)^2} + 2\varepsilon t + \mu_2\mu_3 + \varepsilon b_1^2,
\end{align}
\begin{align}\label{gauss-1313}
\dfrac{2A^2t}{(\mu_2-\mu_1)(\mu_3-\mu_2)} = \dfrac{\varepsilon(\mu_1b_1^2-\mu_3b_3^2)}{\mu_3-\mu_1} +  \dfrac{2 (b_1^2+b_3^2)t}{(\mu_3-\mu_1)^2} + 2\varepsilon t + \mu_1\mu_3 + \varepsilon b_2^2.
\end{align}
In what follows, we will eliminate $A$ from the above equations. By solving \eqref{gauss-1212} for $A$ and substituting the result in \eqref{gauss-2323} and \eqref{gauss-1313}, we get the following equations in the variables $b_1^2$, $b_2^2$ and $b_3^2$:
\begin{equation}\label{system-b1b2b3-1}
\begin{split}
&\left[\dfrac{\varepsilon\mu_1(\mu_3 - \mu_2)}{\mu_2 - \mu_1} + \dfrac{2(\mu_3 - \mu_2)t}{(\mu_2 - \mu_1)^2} - \varepsilon(\mu_2 - \mu_1) \right]b_1^2 \\
& + \left[\dfrac{\varepsilon\mu_2(\mu_2 - \mu_3)}{\mu_2 - \mu_1} - \dfrac{\varepsilon\mu_2(\mu_2 - \mu_1)}{\mu_3 - \mu_2} + \dfrac{2(\mu_3 - \mu_2)t}{(\mu_2 - \mu_1)^2} - \dfrac{2(\mu_2 - \mu_1)t}{(\mu_3 - \mu_2)^2}\right]b_2^2 \\
& + \left[\dfrac{\varepsilon\mu_3(\mu_2 - \mu_1)}{\mu_3 - \mu_2} - \dfrac{2(\mu_2 - \mu_1)t}{(\mu_3 - \mu_2)^2} + \varepsilon(\mu_3 - \mu_2)\right]b_3^2 \\
& + 2\varepsilon(\mu_3 - \mu_2)t - 2\varepsilon(\mu_2 - \mu_1)t + \mu_1\mu_2(\mu_3 - \mu_2) - \mu_2\mu_3(\mu_2 - \mu_1) = 0,
\end{split}
\end{equation}
\begin{equation}\label{system-b1b2b3-2}
\begin{split}
&\left[\dfrac{\varepsilon\mu_1(\mu_3 - \mu_1)}{\mu_2 - \mu_1} - \dfrac{\varepsilon\mu_1(\mu_2 - \mu_1)}{\mu_3 - \mu_1} + \dfrac{2(\mu_3 - \mu_1)t}{(\mu_2 - \mu_1)^2} - \dfrac{2(\mu_2 - \mu_1)t}{(\mu_3 - \mu_1)^2}\right]b_1^2 \\
& + \left[\dfrac{\varepsilon\mu_2(\mu_1 - \mu_3)}{\mu_2 - \mu_1} + \dfrac{2(\mu_3 - \mu_1)t}{(\mu_2 - \mu_1)^2} - \varepsilon(\mu_2 - \mu_1)\right]b_2^2 \\
& + \left[\dfrac{\varepsilon\mu_3(\mu_2 - \mu_1)}{\mu_3 - \mu_1} - \dfrac{2(\mu_2 - \mu_1)t}{(\mu_3 - \mu_1)^2} + \varepsilon(\mu_3 - \mu_1)\right]b_3^2 \\
& + 2\varepsilon(\mu_3 - \mu_1)t - 2\varepsilon(\mu_2 - \mu_1)t + \mu_1\mu_2(\mu_3 - \mu_1) - \mu_1\mu_3(\mu_2 - \mu_1) = 0.
\end{split}
\end{equation}
Since $||T||^2 + t = 1$, we have $b_3^2 = 1 - t - b_1^2 - b_2^2$. Substituting this in \eqref{system-b1b2b3-1} and \eqref{system-b1b2b3-2} yields
\begin{align*}
\begin{split}
&\left[\dfrac{\varepsilon\mu_1(\mu_3 - \mu_2)}{\mu_2 - \mu_1} - \dfrac{\varepsilon\mu_3(\mu_2 - \mu_1)}{\mu_3 - \mu_2} + \dfrac{2(\mu_3 - \mu_2)t}{(\mu_2 - \mu_1)^2} + \dfrac{2(\mu_2 - \mu_1)t}{(\mu_3 - \mu_2)^2} + \varepsilon(\mu_1 - \mu_3)\right]b_1^2 \\
& + \left[\dfrac{\varepsilon\mu_2(\mu_2 - \mu_3)}{\mu_2 - \mu_1} - \dfrac{\varepsilon\mu_2(\mu_2 - \mu_1)}{\mu_3 - \mu_2} - \dfrac{\varepsilon\mu_3(\mu_2 - \mu_1)}{\mu_3 - \mu_2} + \dfrac{2(\mu_3 - \mu_2)t}{(\mu_2 - \mu_1)^2} - \varepsilon(\mu_3 - \mu_2)\right]b_2^2 \\
& + \dfrac{2(\mu_2 - \mu_1)t^2}{(\mu_3 - \mu_2)^2} + \left(\dfrac{2(\mu_1 - \mu_2)}{(\mu_3 - \mu_2)^2} + \dfrac{\varepsilon\mu_3(\mu_1 - \mu_2)}{\mu_3 - \mu_2} + \varepsilon(\mu_3 - \mu_2) + 2\varepsilon(\mu_1 - \mu_2)\right)t \\
& + \dfrac{\varepsilon\mu_3(\mu_2 - \mu_1)}{\mu_3 - \mu_2} + \mu_1\mu_2(\mu_3 - \mu_2) + \mu_2\mu_3(\mu_1 - \mu_2) + \varepsilon(\mu_3 - \mu_2) = 0,
\end{split}
\end{align*}
\begin{align*}
\begin{split}
&\left[\dfrac{\varepsilon\mu_1(\mu_3 - \mu_1)}{\mu_2 - \mu_1} - \dfrac{\varepsilon\mu_1(\mu_2 - \mu_1)}{\mu_3 - \mu_1} - \dfrac{\varepsilon\mu_3(\mu_2 - \mu_1)}{\mu_3 - \mu_1} + \dfrac{2(\mu_3 - \mu_1)t}{(\mu_2 - \mu_1)^2} - \varepsilon(\mu_3 - \mu_1)\right]b_1^2 \\
& + \biggr[\dfrac{\varepsilon\mu_2(\mu_1 - \mu_3)}{\mu_2 - \mu_1} - \dfrac{\varepsilon\mu_3(\mu_2 - \mu_1)}{\mu_3 - \mu_1} + \dfrac{2(\mu_3 - \mu_1)t}{(\mu_2 - \mu_1)^2} + \dfrac{2(\mu_2 - \mu_1)t}{(\mu_3 - \mu_1)^2} \\
&- \varepsilon(\mu_3 - \mu_1) - \varepsilon(\mu_2 - \mu_1)\biggr]b_2^2 + \dfrac{2(\mu_2 - \mu_1)t^2}{(\mu_3 - \mu_1)^2} \\
& + \left(\dfrac{2(\mu_1 - \mu_2)}{(\mu_3 - \mu_1)^2} + \dfrac{\varepsilon\mu_3(\mu_1 - \mu_2)}{\mu_3 - \mu_1} + \varepsilon(\mu_3 - \mu_1) + 2\varepsilon(\mu_1 - \mu_2)\right)t \\
& + \dfrac{\varepsilon\mu_3(\mu_2 - \mu_1)}{\mu_3 - \mu_1} + \mu_1\mu_2(\mu_3 - \mu_1) + \mu_1\mu_3(\mu_1 - \mu_2) + \varepsilon(\mu_3 - \mu_1) = 0,
\end{split}
\end{align*}
that is, we obtain the following system in the  variables $b_1^2$ and $b_2^2$:
\begin{align}\label{linearsystemb1b2-1}
\begin{split}
&\left[\dfrac{2(\mu_3 - \mu_2)^3t + 2(\mu_2 - \mu_1)^3t + C_{11}}{(\mu_2 - \mu_1)^2(\mu_3 - \mu_2)^2}\right]b_1^2 + \left[\dfrac{2(\mu_3 - \mu_2)t + C_{12}}{(\mu_2 - \mu_1)^2}\right]b_2^2 \\
& + \dfrac{2(\mu_2 - \mu_1)t^2}{(\mu_3 - \mu_2)^2} + L_1(t) = 0,
\end{split}
\end{align}
\begin{align}\label{linearsystemb1b2-2}
\begin{split}
&\left[\dfrac{2(\mu_3 - \mu_1)t + C_{21}}{(\mu_2 - \mu_1)^2}\right]b_1^2 + \left[\dfrac{2(\mu_3 - \mu_1)^3t + 2(\mu_2 - \mu_1)^3t + C_{22}}{(\mu_2 - \mu_1)^2(\mu_3 - \mu_1)^2}\right]b_2^2 \\
& + \dfrac{2(\mu_2 - \mu_1)t^2}{(\mu_3 - \mu_1)^2} + L_2(t)=0,
\end{split}
\end{align}
where $C_{11}$, $C_{12}$, $C_{21}$ and $C_{22}$ are expressions depending on $\mu_1, \, \mu_2, \, \mu_3, \, \varepsilon$, but not on $t$, and $L_1(t)$ and $L_2(t)$ are degree one polynomials in the variable $t$. Therefore, $b_1^2$ and $b_2^2$ are solutions of the linear system
\begin{align}
M
\left(
\begin{array}{c}
b_1^2 \\
b_2^2
\end{array}
\right)
=
\left(
\begin{array}{c}
\dfrac{2(\mu_1 - \mu_2)t^2}{(\mu_3 - \mu_2)^2} - L_1(t) \\
\dfrac{2(\mu_1 - \mu_2)t^2}{(\mu_3 - \mu_1)^2} - L_2(t)
\end{array}
\right),
\end{align}
where $M$ is the matrix
\begin{align*}
M = 
\left(
\begin{array}{cc}
\dfrac{2(\mu_3 - \mu_2)^3t + 2(\mu_2 - \mu_1)^3t + C_{11}}{(\mu_2 - \mu_1)^2(\mu_3 - \mu_2)^2} & \dfrac{2(\mu_3 - \mu_2)t + C_{12}}{(\mu_2 - \mu_1)^2} \\
\dfrac{2(\mu_3 - \mu_1)t + C_{21}}{(\mu_2 - \mu_1)^2} & \dfrac{2(\mu_3 - \mu_1)^3t + 2(\mu_2 - \mu_1)^3t + C_{22}}{(\mu_2 - \mu_1)^2(\mu_3 - \mu_1)^2}
\end{array}
\right).
\end{align*}
Thus, we have
\begin{align*}
b_1^2 &= \dfrac{1}{\det M}\biggr[\left(\dfrac{2(\mu_3 - \mu_1)^3t + 2(\mu_2 - \mu_1)^3t + C_{22}}{(\mu_2 - \mu_1)^2(\mu_3 - \mu_1)^2}\right) \left(\dfrac{2(\mu_1 - \mu_2)t^2 - (\mu_3 - \mu_2)^2L_1(t)}{(\mu_3 - \mu_2)^2}\right) \\
&\quad \quad \quad \quad \quad - \left(\dfrac{2(\mu_3 - \mu_2)t + C_{12}}{(\mu_2 - \mu_1)^2}\right)\left(\dfrac{2(\mu_1 - \mu_2)t^2 - (\mu_3 - \mu_1)^2L_2(t)}{(\mu_3 - \mu_1)^2}\right)\biggr] \\
&= \dfrac{1}{\det M}\biggr[\dfrac{4(\mu_1 - \mu_2)\left((\mu_3 - \mu_1)^3 + (\mu_2 - \mu_1)^3\right)t^3}{(\mu_2 - \mu_1)^2(\mu_3 - \mu_1)^2(\mu_3 - \mu_2)^2} + \dfrac{4(\mu_1 - \mu_2)(\mu_2 - \mu_3)t^3}{(\mu_2 - \mu_1)^2(\mu_3 - \mu_1)^2} \\
& \quad + \dfrac{2C_{22}(\mu_1 - \mu_2)t^2 - 2(\mu_3 - \mu_2)^2\left((\mu_3 - \mu_1)^3 + (\mu_2 - \mu_1)^3\right)tL_1(t) - C_{22}(\mu_3 - \mu_2)^2L_1(t)}{(\mu_2 - \mu_1)^2(\mu_3 - \mu_1)^2(\mu_3 - \mu_2)^2}\\
& \quad - \dfrac{2C_{12}(\mu_1 - \mu_2)t^2 - 2(\mu_3 - \mu_1)^2(\mu_3 - \mu_2)tL_2(t) - C_{12}(\mu_3 - \mu_1)^2L_2(t)}{(\mu_2 - \mu_1)^2(\mu_3 - \mu_1)^2}\biggr], 
\end{align*}
that is
\begin{equation} \label{b1-1st-step}
b_1^2 = \dfrac{4(\mu_1 - \mu_2)\left((\mu_3 - \mu_1)^3 + (\mu_2 - \mu_1)^3 + (\mu_2 - \mu_3)^3\right)t^3 + \tilde{Q}_1(t)}{\det M \, (\mu_2 - \mu_1)^2(\mu_3 - \mu_1)^2(\mu_3 - \mu_2)^2},
\end{equation}
where $\tilde{Q}_1(t)$ is a polynomial of degree 2. Analogous computations provide
\begin{align}\label{b2-1st-step}
b_2^2 &= \dfrac{4(\mu_1 - \mu_2)\left((\mu_3 - \mu_2)^3 + (\mu_2 - \mu_1)^3 + (\mu_1 - \mu_3)^3\right)t^3 + \tilde{Q}_2(t)}{\det M \, (\mu_2 - \mu_1)^2(\mu_3 - \mu_1)^2(\mu_3 - \mu_2)^2},
\end{align}
where $\tilde{Q}_2(t)$ is also a polynomial of degree 2.

Now we will compute the determinant of $M$. Note that
\begin{align*}
\det M &= \dfrac{4\bigg((\mu_3 - \mu_2)^3 + (\mu_2 - \mu_1)^3\bigg)\bigg((\mu_3 - \mu_1)^3 + (\mu_2 - \mu_1)^3\bigg)t^2}{(\mu_1 - \mu_2)^4(\mu_1 - \mu_3)^2(\mu_2 - \mu_3)^2} \\
& \quad + \dfrac{2\bigg(C_{11}(\mu_3 - \mu_1)^3 + C_{11}(\mu_2 - \mu_1)^3 + C_{22}(\mu_3 - \mu_2)^3 + C_{22}(\mu_2 - \mu_1)^3\bigg)t + C_{11}C_{22}}{(\mu_1 - \mu_2)^4(\mu_1 - \mu_3)^2(\mu_2 - \mu_3)^2} \\
& \quad + \dfrac{4(\mu_1 - \mu_3)(\mu_3 - \mu_2)t^2}{(\mu_1 - \mu_2)^4} - \dfrac{2\bigg(C_{12}(\mu_3 - \mu_1) + C_{21}(\mu_3 - \mu_2)\bigg)t + C_{12}C_{21}}{(\mu_1 - \mu_2)^4} 
\end{align*}

\begin{align*}
&= \dfrac{4\left[\bigg((\mu_3 - \mu_2)^3 + (\mu_2 - \mu_1)^3\bigg)\bigg((\mu_3 - \mu_1)^3 + (\mu_2 - \mu_1)^3\bigg) - (\mu_3 - \mu_1)^3(\mu_3 - \mu_2)^3\right]t^2 +\tilde{L}(t)}{(\mu_1 - \mu_2)^4(\mu_1 - \mu_3)^2(\mu_2 - \mu_3)^2} . \\
\end{align*}
where $\tilde{L}(t)$ is a degree one polynomial in  $t$. Therefore, we conclude that $\det M$ is given by
\begin{align} \label{DetD-1st-step}
\det M &= \dfrac{4(\mu_2 - \mu_1)^3\bigg((\mu_3 - \mu_2)^3 + (\mu_3 - \mu_1)^3 + (\mu_2 - \mu_1)^3\bigg)t^2+\tilde{L}(t)}{(\mu_1 - \mu_2)^4(\mu_1 - \mu_3)^2(\mu_2 - \mu_3)^2}.
\end{align}

Now we will consider the coefficients of the highest degree terms of the polynomials given by the numerators of $b_1^2$, $b_2^2$ and $\det M$. We have
\begin{align*}
4\left((\mu_3 - \mu_1)^3 + (\mu_2 - \mu_1)^3 + (\mu_2 - \mu_3)^3\right) &= 12\mu_3^2(\mu_2 - \mu_1) - 12\mu_3(\mu_2^2 - \mu_1^2) + 8(\mu_2^3 - \mu_1^3) \\
& \quad- 12\mu_1\mu_2(\mu_2 - \mu_1)\\
&=(\mu_2 - \mu_1)\left(12\mu_3^2 - 12\mu_1\mu_3 - 12\mu_2\mu_3 + 8\mu_2^2 + 8\mu_1^2 - 4\mu_1\mu_2\right) \\
&=2(\mu_2 - \mu_1)\left(3(\mu_1 - \mu_3)^2 + 3(\mu_2 - \mu_3)^2 + (\mu_1 - \mu_2)^2\right)
\end{align*}
and similarly
\begin{align*}
4\left((\mu_3 - \mu_2)^3 + (\mu_3 - \mu_1)^3 + (\mu_2 - \mu_1)^3\right) &= 8(\mu_3^3 - \mu_1^3) - 12\mu_2(\mu_3^2 - \mu_1^2) + 12\mu_2^2(\mu_3 - \mu_1) \\
&\quad- 12\mu_1\mu_3(\mu_3 - \mu_1) \\
&=(\mu_3 - \mu_1)\left(8\mu_1^2 - 12\mu_1\mu_2 - 4\mu_1\mu_3 + 12\mu_2^2 - 12\mu_2\mu_3 + 8\mu_3^2\right) \\
&=2(\mu_3 - \mu_1)\left((\mu_1 - \mu_3)^2 + 3(\mu_1 - \mu_2)^2 + 3(\mu_2 - \mu_3)^2\right).
\end{align*}
\begin{align*}
(\mu_3 - \mu_2)^3 + (\mu_2 - \mu_1)^3 + (\mu_1 - \mu_3)^3  &= 3\left(\mu_3^2(\mu_1 - \mu_2) + \mu_3(\mu_2^2 - \mu_1^2) + \mu_{1}\mu_2(\mu_1 - \mu_2)\right) \\
&=3(\mu_1 - \mu_2)\left(\mu_3^2 - \mu_3(\mu_1 + \mu_2) + \mu_{1}\mu_2\right) \\
&=3(\mu_1 - \mu_2)(\mu_1 - \mu_3)(\mu_2 - \mu_3).
\end{align*}

In conclusion, we have
\begin{align}
b_1^2 &= \dfrac{-2(\mu_1 - \mu_2)^4\bigg(3(\mu_1 - \mu_3)^2 + 3(\mu_2 - \mu_3)^2 + (\mu_1 - \mu_2)^2\bigg)t^3 + Q_1(t)}{2(\mu_1 - \mu_3)(\mu_1 - \mu_2)^3\bigg((\mu_1 - \mu_3)^2 + 3(\mu_1 - \mu_2)^2 + 3(\mu_2 - \mu_3)^2\bigg)t^2 + L(t)}, \label{eqb_1^2} \\
b_2^2 &= \dfrac{12(\mu_1 - \mu_2)^4(\mu_1 - \mu_3)(\mu_2 - \mu_3)t^3 + Q_2(t)}{2(\mu_1 - \mu_3)(\mu_1 - \mu_2)^3\bigg((\mu_1 - \mu_3)^2 + 3(\mu_1 - \mu_2)^2 + 3(\mu_2 - \mu_3)^2\bigg)t^2 + L(t)},\label{eqb_2^2} 
\end{align}
where $Q_1(t)$ and $Q_2(t)$ are quadratic polynomials and $L(t)$ is a degree one polynomial in $t$. Finally, since $b_3^2 = 1-t-b_1^2-b_2^2$, a straightforward computation gives
\begin{align}
b_3^2 &= \dfrac{-2(\mu_2 - \mu_3)(\mu_1 - \mu_2)^3\bigg(3(\mu_1 - \mu_2)^2 + 3(\mu_1 - \mu_3)^2 + (\mu_2 - \mu_3)^2\bigg)t^3 + Q_3(t)}{2(\mu_1 - \mu_3)(\mu_1 - \mu_2)^3\bigg((\mu_1 - \mu_3)^2 + 3(\mu_1 - \mu_2)^2 + 3(\mu_2 - \mu_3)^2\bigg)t^2 + L(t)}, \label{eqb_3^2}
\end{align}
where $Q_3$ is a quadratic polynomials in $t$. 

\end{proof}

We are now in a position to prove our main lemma.

\begin{lemma}\label{lemma-characterization}
Let $\Sigma$ be a connected hypersurface of $\Qi^3 \times \Ri$ with three distinct constant principal curvatures. Then $\cos\theta$ is constant on $\Sigma$.
\end{lemma}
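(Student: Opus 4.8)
The plan is to argue by contradiction. Suppose $\cos\theta$ is non-constant on the connected hypersurface $\Sigma$. Then the open set $U=\{p\in\Sigma : d\cos\theta|_p\neq 0\}$ is nonempty, and $\cos\theta$ cannot vanish on any nonempty open subset of $U$; hence I may pass to a nonempty connected open set $\Omega'\subseteq U$ on which an orthonormal frame $\{e_1,e_2,e_3\}$ of principal directions exists (the $\mu_i$ being distinct, the eigendistributions are smooth line fields), $\cos\theta\neq 0$, and $t:=\cos^2\theta$ is non-constant, so that $t(\Omega')$ contains an open interval. On $\Omega'$ the functions $b_i^2$ are, by Lemma \ref{lemma-expressions-bi}, the rational functions $f_i(t):=p_i(t)/q(t)$ of the single variable $t$; since the leading coefficients of the $p_i$ recorded in \eqref{eqb_1^2}--\eqref{eqb_3^2} are nonzero, none of the $f_i$ vanishes identically, so each $b_i$ is nonzero on a dense open subset of $\Omega'$.

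The key step is to extract from Lemma \ref{lemma-equations} a relation that is purely algebraic in $t$. For an index $n$ with $\mu_n\neq 0$ I would compute $e_n(b_n^2)$ in two ways. On one hand, \eqref{en-bn} gives $e_n(b_n^2)=2b_n\cos\theta\big[\mu_n-\varepsilon(\tfrac{b_l^2}{\mu_l-\mu_n}+\tfrac{b_m^2}{\mu_m-\mu_n})\big]$. On the other hand, writing $b_n^2=f_n(t)$ and using \eqref{cos-deriv} in the form $e_n(t)=2\cos\theta\,e_n(\cos\theta)=-2\mu_n b_n\cos\theta$, the chain rule gives $e_n(b_n^2)=f_n'(t)\,e_n(t)=-2\mu_n b_n\cos\theta\,f_n'(t)$. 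Equating and dividing by $2b_n\cos\theta$ (valid on the dense open set where $b_n\cos\theta\neq 0$) yields
\begin{equation*}
\mu_n\big(1+f_n'(t)\big)=\varepsilon\left(\frac{f_l(t)}{\mu_l-\mu_n}+\frac{f_m(t)}{\mu_m-\mu_n}\right),
\end{equation*}
which, holding for all $t$ in an interval, is an identity of rational functions in $t$.

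Next I would clear denominators by multiplying through by $q(t)^2$, obtaining a polynomial identity. Using $\deg p_i=3$ and $\deg q=2$ from Lemma \ref{lemma-expressions-bi}, the left-hand side $\mu_n(q^2+p_n'q-p_nq')$ has degree $4$, whereas the right-hand side $\varepsilon q\big(\tfrac{p_l}{\mu_l-\mu_n}+\tfrac{p_m}{\mu_m-\mu_n}\big)$ has degree $5$, with leading coefficient $\varepsilon c\big(\tfrac{a_l}{\mu_l-\mu_n}+\tfrac{a_m}{\mu_m-\mu_n}\big)$, where $c\neq 0$ is the leading coefficient of $q$ and $a_i$ that of $p_i$. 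For the identity to hold, this degree-$5$ coefficient must vanish. A direct computation with the explicit $a_i$ shows that the resulting conditions for $n=1,2,3$ are the relations $3\mu_1=2\mu_2+\mu_3$, $2\mu_2=\mu_1+\mu_3$ and $3\mu_3=\mu_1+2\mu_2$, and that these are pairwise incompatible for three distinct $\mu_i$ (any two of them force two of the $\mu_i$ to coincide). Since at most one principal curvature vanishes, there are at least two indices with $\mu_n\neq 0$; they form one of these pairs, so for at least one such $n$ the coefficient is nonzero, contradicting the degree count. Hence $t$, and therefore $\cos\theta$, is constant on the connected $\Sigma$.

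I expect the main obstacle to be the bookkeeping in the third paragraph: correctly reading off the degree-$5$ coefficient after clearing denominators and verifying, from the explicit leading coefficients of Lemma \ref{lemma-expressions-bi}, that the three linear relations among the $\mu_i$ cannot hold even pairwise. A secondary point requiring care is the justification of the algebraic identity in $t$ itself, namely restricting to the dense open set where $b_n\neq 0$ and $\cos\theta\neq 0$ and using only indices $n$ with $\mu_n\neq 0$, so that every division step is legitimate and the two expressions genuinely agree on an interval of $t$-values.
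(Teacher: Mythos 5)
Your proposal is correct and follows essentially the same route as the paper's own proof: compute $e_n(b_n^2)$ in two ways using \eqref{en-bn}, \eqref{cos-deriv} and Lemma \ref{lemma-expressions-bi}, note that the resulting identity of rational functions in $t$ forces the coefficient of $t^5$ to vanish after clearing denominators, and deduce the linear relations $3\mu_1=2\mu_2+\mu_3$, $2\mu_2=\mu_1+\mu_3$, $3\mu_3=\mu_1+2\mu_2$, which contradict the distinctness of the $\mu_i$. Your only deviation is the restriction to indices with $\mu_n\neq 0$ --- which is in fact unnecessary, since the derivation divides only by $b_n\cos\theta$ and never by $\mu_n$ --- but it is harmless, because, as you correctly verify, any two of the three relations already force two of the principal curvatures to coincide.
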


\begin{proof} Let $\Omega \subset \Sigma$ be an open subset as in Lemma \ref{lemma-equations}. We will show that $\cos\theta$ is constant on $\Omega$, such that, by continuity, $\cos\theta$ is constant on $\Sigma$. We give a proof by contradiction. So suppose that $\cos\theta$ is not constant. In particular $\cos\theta \neq 0$ on a dense open subset of $\Omega$ and, by Lemma~\ref{lemma-expressions-bi}, the same holds for $b_1$, $b_2$ and $b_3$. 

We will now calculate $e_1(b^2_1)$, $e_2(b^2_2)$ and  $e_3(b^2_3)$ using Lemma \ref{lemma-equations} and Lemma \ref{lemma-expressions-bi} to find polynomial identities in the variable $t=\cos^2\theta$. Since $t$ is not constant, the polynomials have to be identically zero. Let us start with $e_1(b^2_1)$. On the one hand, using \eqref{b_i^2} in the first equality and \eqref{cos-deriv} in the second, we have
$$ e_1(b_1^2) = \frac{p_1'(t)q(t)-p_1(t)q'(t)}{q(t)^2} \, e_1(t) = -2\mu_1b_1\cos\theta \, \frac{p_1'(t)q(t)-p_1(t)q'(t)}{q(t)^2}. $$
On the other hand, using \eqref{en-bn}, we have
$$ e_1(b_1^2) = 2b_1e_1(b_1) = 2 b_1 \cos\theta \left[ \mu_1 - \varepsilon \left( \frac{b_2^2}{\mu_2-\mu_1} + \frac{b_3^2}{\mu_3-\mu_1} \right) \right]. $$
By comparing the two expressions above and using \eqref{b_i^2}, we obtain
\begin{equation*}\label{e1-b1}
\varepsilon q(t) [(\mu_3 \!-\! \mu_1)p_2(t) \!+\! (\mu_2 \!-\! \mu_1)p_3(t)] \!-\! \mu_1(\mu_2 \!-\! \mu_1)(\mu_3 \!-\! \mu_1)[(p_1'(t)q(t) \!-\! p_1(t)q'(t)) \!+\! q^2(t)] = 0.
\end{equation*}
As explained before, since $t$ is not constant, the polynomial has to vanish identically. In particular, the leading coefficient, i.e., the coefficient of $t^5$, has to vanish. Note that that $t^5$ only appears in $\varepsilon q(t) [(\mu_3 \!-\! \mu_1)p_2(t) \!+\! (\mu_2 \!-\! \mu_1)p_3(t)]$ and its coefficient is
\begin{equation}\label{coefficient-1}
-8\varepsilon(3\mu_1 \!-\! 2\mu_2 \!-\! \mu_3)(\mu_1 \!-\! \mu_2)^7(\mu_2 \!-\! \mu_3)^2(\mu_1\!-\! \mu_3)[(\mu_1 \!-\! \mu_3)^2 \!+\! 3(\mu_1 \!-\! \mu_2)^2 \!+\! 3(\mu_2 \!-\! \mu_3)^2].
\end{equation}

Now let us consider $e_2(b^2_2)$. In an analogous way as before, we obtain
\begin{equation*}\label{e2-b2}
\varepsilon q(t) [(\mu_3 \!-\! \mu_2)p_1(t) \!+\! (\mu_1 \!-\! \mu_2)p_3(t)] \!-\! \mu_2(\mu_1 \!-\! \mu_2)(\mu_3 \!-\! \mu_2)[p_2'(t)q(t) \!-\! p_2(t)q'(t)) \!+\! q^2(t)] = 0
\end{equation*}
and hence the leading coefficient, 
\begin{equation}\label{coefficient-2}
-8\varepsilon(\mu_1 \!-\! 2\mu_2 \!+\! \mu_3)(\mu_1 \!-\! \mu_2)^7(\mu_1 \!-\! \mu_3)^2(\mu_2 \!-\! \mu_3)[(\mu_1 \!-\! \mu_3)^2 \!+\! 3(\mu_1 \!-\! \mu_2)^2 \!+\! 3(\mu_2 \!-\! \mu_3)^2],
\end{equation}
must vanish. Finally, we consider $e_3(b^2_3)$. Since
\begin{equation*}\label{e3-b3}
\varepsilon q(t) [(\mu_1 \!-\! \mu_3)p_2(t) \!+\! (\mu_2 \!-\! \mu_3)p_1(t)] \!-\! \mu_3(\mu_1 \!-\! \mu_3)(\mu_2 \!-\! \mu_3)[p_3'(t)q(t) \!-\! p_3(t)q'(t) \!+\! q(t)^2] = 0,
\end{equation*}
the leading coefficient,
\begin{equation}\label{coefficient-3}
8\varepsilon(\mu_1 \!+\! 2\mu_2 \!-\! 3\mu_3)(\mu_1 \!-\! \mu_2)^8(\mu_1 \!-\! \mu_3)(\mu_2 \!-\! \mu_3)[(\mu_1 \!-\! \mu_3)^2 \!+\! 3(\mu_1 \!-\! \mu_2)^2 \!+\! 3(\mu_2 \!-\! \mu_3)^2],
\end{equation}
must vanish. Since the expressions \eqref{coefficient-1}, \eqref{coefficient-2} and \eqref{coefficient-3} have to vanish and $\mu_1$, $\mu_2$ and $\mu_3$ are distinct, we obtain the following homogeneous linear system:
\begin{align}
& 3\mu_1 - 2\mu_2 - \mu_3 = 0, \label{systemequations1}\\
& \mu_1 - 2\mu_2 + \mu_3 = 0, \label{systemequations2}\\
& \mu_1 + 2\mu_2 - 3\mu_3 = 0.\label{systemequations3}
\end{align}
From \eqref{systemequations1} and \eqref{systemequations2}, we obtain that $\mu_1=\mu_2=\mu$ , and therefore, from \eqref{systemequations3}, we conclude that $\mu_3=\mu$, which contradicts the fact that the principal curvatures are distinct. This concludes the proof of the lemma.
\end{proof}

\section{Proof of the main results}

\begin{proof}[Proof of Theorem \ref{theorem-classification}]
Let $\Sigma$ be a hypersurface of $\Qi^3 \times \R$ with three distinct constant principal curvatures. By Lemma \ref{lemma-characterization}, $\cos\theta$ is constant, such that also $\|T\|$ is constant, by the decomposition $\partial_{t} = T + \cos\theta\,N$. Since the principal curvatures are distinct, $T \neq 0$, otherwise $\Sigma$ is part of a slice $\Qi^3 \times \{t_{0}\}$, which is totally geodesic. Therefore, it follows from \eqref{Xcos} that $T$ is a principal direction, with associated principal curvature $0$. Suppose that $\cos\theta \neq 0$. Then, from \cite[Theorem 4.1]{Rosa-and-Eliane}, it follows that $\varepsilon = -1$ and $\Sigma$ is locally parametrized by $f(p,s) = h_{s}(p) + Bs\,\partial_t$ for some $B \in \Ri$, $B > 0$, where $h_{s}$ is a family of parallel horospheres in $\Hi^3$. Moreover, the non-zero principal curvatures are both equal, which is a contradiction. Therefore $\cos\theta = 0$ and $\Sigma$ is an open part of a vertical cylinder. Since $\Sigma$ has three distinct constant principal curvatures, the principal curvatures of the base surface have to be constant, non-zero and distinct. The classification goes back to Cartan \cite{isoCartan}, see also Theorem $1$ in ~\cite{ojm/1200693241}.
\end{proof}

\begin{proof}[Proof of Theorem \ref{Corollary}]
Corollary 3.4 in \cite{Rosa-and-Eliane} states that a hypersurface of $\Qi^n\times\R$ for which $\cos\theta$ is a constant different from $1$ is isoparametric if and only if it has constant principal curvatures. The result thus follows from proving the constancy of $\cos\theta$ and the observation that slices are isoparametric. If the number of distinct principal curvatures $g$ equals $1$ or $2$, this is already established in \cite[Theorem 6.1]{Rosa-and-Eliane}. For $g=3$, it follows from Lemma \ref{lemma-characterization}.
\end{proof}

\begin{proof}[Proof of Theorem \ref{corollary-2}] It is well-known that an extrinsically homogeneous hypersurface has constant principal curvatures and is isoparametric (see for example \cite[Proposition 2.10]{notas-miguel}). By Theorem \ref{Corollary} it is enough to search such hypersurfaces among those with constant principal curvatures. The hypersurfaces given in items (i), (ii)(a) and (ii)(b) are provided by the classification theorem \cite[Theorem 6.1]{Rosa-and-Eliane} of hypersufaces with $g\in\{1,2\}$ constant principal curvatures, whereas (ii)(c) and (ii)(d) are given by Theorem \ref{theorem-classification}. All these hypersurfaces are obviously extrinsically homogeneous. The hypersurface of item (iii) is also provided by \cite[Theorem 6.1]{Rosa-and-Eliane} and has $g=2$. In order to show that it is extrinsically homogeneous, we use the same argument as the one used in \cite{dominguez-manzano} to show that the parabolic helicoid in $\mathbb{H}^2 \times \mathbb{R}$ is extrinsically homogeneous. In fact, as we pointed out in the paragraph right after the statement of Theorem \ref{corollary-2}, the hypersurface of item (iii) is given by the graph $\Sigma = \{(u,\,v,\,w,\,B \log(w)) \,|\, u,v \in \R, \, w>0 \} \subset \mathbb{H}^3 \times \mathbb{R}$, which is invariant by the 2-parameters group of isometries $(u,v,w,t) \mapsto (u+\lambda_1, v+\lambda_2, w, t)$ and the 1-parameter group of isometries $(u,v,w,t) \mapsto (e^{\lambda} u,\,e^{\lambda} v,\,e^{\lambda} w,\, t-B \lambda)$. Therefore, $\Sigma$ is extrinsically homogeneous. \end{proof}

\bibliographystyle{abbrv}
\bibliography{refs}

\end{document}